\newtheorem{thm}{Theorem}[section]
\newtheorem{lem}[thm]{Lemma}
\newcommand{\mcD}{{\mathcal{D}}}
\newcommand{\R}{{\mathbb{R}}}
\newcommand{\Z}{{\mathbb{Z}}}
\newcommand{\1}{\partial}
\newcommand{\2}{\overline}
\newcommand{\3}{\varepsilon}
\newcommand{\4}{\widetilde}
\def\ni{\noindent}
\begin{document}
\title{Another proof of Ricci flow on incomplete\\ 
surfaces with bounded above Gauss curvature} 
\author{Shu-Yu Hsu\\
Department of Mathematics\\
National Chung Cheng University\\
168 University Road, Min-Hsiung\\
Chia-Yi 621, Taiwan, R.O.C.\\
e-mail: syhsu@math.ccu.edu.tw}
\date{Nov 10, 2010}
\smallbreak \maketitle
\begin{abstract}
We give a simple proof of an extension of the existence results of Ricci 
flow of G.~Giesen and P.M.~Topping \cite{GiT1},\cite{GiT2}, on incomplete 
surfaces with bounded above Gauss curvature without using the difficult 
Shi's existence theorem of Ricci flow on complete non-compact surfaces 
and the pseudolocality theorem of G.~Perelman \cite{P1} on Ricci flow. 
We will also give a simple proof of a special case of the existence 
theorem of P.M.~Topping \cite{T} without using the existence theorem of 
W.X.~Shi \cite{S1}.
\end{abstract}

\vskip 0.2truein

Key words: Ricci flow, incomplete surfaces, negative Gauss curvature

AMS Mathematics Subject Classification: Primary 58J35, 53C99 Secondary 35K55
\vskip 0.2truein
\setcounter{equation}{0}

\setcounter{equation}{0}
\setcounter{thm}{0}

Recently there is a lot of study on the Ricci flow on manifold by R.~Hamilton
\cite{H1}, \cite{H2}, S.Y.~Hsu [Hs1--4], G.~Perelman \cite{P1}, \cite{P2}, 
W.X.~Shi \cite{S1}, \cite{S2}, L.F.~Wu \cite{W1}, \cite{W2}, and others because 
Ricci flow is a powerful tool in the study of geometric problems. We refer the 
readers to the book \cite{CLN} by B.~Chow, P.~Lu and L.~Ni, for the basics 
of Ricci flow and the papers \cite{P1}, \cite{P2}, of G.~Perelman and
the book \cite{Z} by Qi S.~Zhang for the most recent results on Ricci flow.
 
In 1982 R.~Hamilton \cite{H1} proved that if $M$ is a compact manifold and
$g_{ij}(x)$ is a metric of strictly positive Ricci curvature, then there
exists a unique metric $g$ that evolves by the Ricci flow
\begin{equation}\label{Ricci-eqn}
\frac{\1 }{\1 t}g_{ij}=-2R_{ij}
\end{equation}
on $M\times (0,T)$ for some $T>0$ with $g_{ij}(x,0)=g_{ij}(x)$
where $R_{ij}(\cdot,t)$ is the Ricci curvature of $g(\cdot,t)$.

Short time existence of solutions of the Ricci flow on complete
non-compact Riemannian manifold with bounded curvature was proved by W.X~Shi
\cite{S1}. Global existence and uniqueness of solutions of the Ricci flow 
on non-compact manifold $\R^2$ was obtained by S.Y.~Hsu in \cite{Hs1}. 
Existence and uniqueness of the Ricci flow on incomplete surfaces with
negative Gauss curvature was obtained by  G.~Giesen and P.M.~Topping
in \cite{GiT1}. In \cite{GiT1} G.~Giesen and P.M.~Topping proved the 
following theorem.

\begin{thm}\label{main-thm}(Theorem 1.1 of \cite{GiT1})
Suppose $M$ is a surface (i.e. a 2-dimensional manifold without boundary)
equipped with a smooth Riemannian metric $g_0$ whose Gauss curvature satisfies
$K[g_0]\le -\eta<0$, but which need not be complete. Then exists a unique
smooth Ricci flow $g(t)$ for $t\in [0,\infty)$ with the following properties:

\begin{enumerate}

\item[(i)]$g(0)=g_0$;

\item[(ii)] $g(t)$ is complete for all $t>0$;

\item[(iii)] the curvature of $g(t)$ is bounded above for any compact time 
interval within $[0,\infty)$;

\item[(iv)] the curvature of $g(t)$ is bounded below for any compact time 
interval within $(0,\infty)$.

\end{enumerate} 

Moreover this solution satisfies $K[g(t)]\le-\frac{\eta}{1+t\eta}$ for 
$t\ge 0$ and $-\frac{1}{2t}\le K[g(t)]$ for $t>0$.

\end{thm}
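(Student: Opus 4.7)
Because $M$ is two-dimensional with a fixed conformal reference $g_0$, I would reduce (\ref{Ricci-eqn}) to a scalar quasilinear parabolic equation by writing $g(t) = e^{2u(\cdot,t)} g_0$. In this gauge the Ricci flow is equivalent to
\begin{equation}\label{conformal-eqn}
u_t = e^{-2u}\bigl(\Delta_{g_0} u - K[g_0]\bigr) = -K[g(t)], \qquad u(\cdot,0) = 0,
\end{equation}
and completeness of $g(t)$ becomes the assertion that $e^{u(\cdot,t)}$ is non-integrable along any path exiting every compact subset of $M$. The reduction converts the geometric problem into the analysis of a single PDE for which comparison principles and interior parabolic estimates are available without any recourse to Shi's theorem.

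\textbf{Dirichlet problems on an exhaustion with hyperbolic barriers.} I would fix an exhaustion $\Omega_1 \Subset \Omega_2 \Subset \cdots \subset M$ by precompact open sets with smooth boundary and, on each $\Omega_j$, solve the Dirichlet problems for (\ref{conformal-eqn}) with initial datum $0$ and finite boundary data $u_j^{(n)}|_{\partial\Omega_j} = n$, which fall squarely within classical parabolic Schauder theory on compact surfaces with boundary. The sequence $u_j^{(n)}$ is monotone nondecreasing in $n$ by comparison, and I would pass to the limit $n \to \infty$ to produce a flow $g_j(t) = e^{2u_j} g_0$ that is complete on $\Omega_j$ for every $t > 0$. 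To control this limit I would invoke the complete hyperbolic metric $h_j = e^{2v_j} g_0|_{\Omega_j}$ with $K[h_j] = -1$ and $v_j \to +\infty$ on $\partial\Omega_j$, which exists on any compact bordered surface: the identity $\Delta_{g_0} v_j - K[g_0] = e^{2v_j}$ makes $U_j(x,t) := v_j(x) + \tfrac12 \log(c + 2t)$, with $c \ge e^{-2\min_{\Omega_j} v_j}$, a supersolution of (\ref{conformal-eqn}) with infinite boundary values, whence $u_j^{(n)} \le U_j$ uniformly in $n$; simultaneously, comparison with the flow of a constant-curvature model yields the subsolution $\underline{u}(t) := \tfrac12 \log(1 + 2\eta t)$ and hence a matching lower bound on each time interval $[0,T]$ with $\underline{u}(T) \le n$. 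Interior parabolic Schauder estimates then supply the compactness needed to extract a smooth limit $u_j$ on every compact subset of $\Omega_j \times [0,\infty)$.

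\textbf{Passage to $M$, curvature bounds, uniqueness, main obstacle.} Comparison forces $u_j$ to be monotone decreasing in $j$ on their common domain, while $\underline{u}$ prevents collapse; interior parabolic regularity then yields a smooth limit $u$ solving (\ref{conformal-eqn}) on $M \times [0,\infty)$. The curvature bounds (iii), (iv) together with the quantitative inequalities $K[g(t)] \le -\eta/(1 + \eta t)$ and $K[g(t)] \ge -1/(2t)$ follow from the maximum principle applied to the evolution equation $\partial_t K = \Delta K + 2K^2$ under (\ref{Ricci-eqn}), the second being obtained universally by applying the principle to $-1/K$; uniqueness follows from the maximum principle applied to the difference of two solutions of (\ref{conformal-eqn}). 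The main obstacle, in my view, is verifying that the limit metric $e^{2u} g_0$ is complete on $M$ itself and not merely on each $\Omega_j$: completeness on the approximations does not automatically descend, and converting the boundary blow-up of $v_j$ into a genuine lower bound for the limit $u$ (forcing any path exiting every compact subset of $M$ to have infinite $g(t)$-length) is where the hypothesis $K[g_0] \le -\eta < 0$ must be used most delicately, likely via a time-dependent hyperbolic subsolution that grows toward $v_j$ after an initial ``waiting time'' so that the infinite boundary blow-up is not lost in the limit.
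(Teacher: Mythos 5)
Your overall architecture (exhaustion, Dirichlet problems with boundary data tending to $+\infty$, hyperbolic barriers from above and below, interior Schauder compactness) is essentially the paper's, which works in the Euclidean conformal gauge $v=e^{2u}$, $v_t=\Delta\log v$ on the disks $B_{1-1/k}$ and quotes Hui's theory for the problem \eqref{v-boundary-infty-problem}. But one of your steps would fail as stated: the upper curvature bound. You propose to get $K[g(t)]\le-\eta/(1+\eta t)$ ``from the maximum principle applied to $\partial_tK=\Delta K+2K^2$''. On which space? On the limit flow the manifold is complete but noncompact and you have no a priori curvature bound, so the parabolic maximum principle is unavailable without exactly the Shi-type machinery the paper is avoiding. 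On your approximations $u_j^{(n)}$ it is available, but there the bound is violated on the parabolic boundary: with time-independent boundary data $u_j^{(n)}=n$ you have $u_t=0$, hence $K=-u_t=0$ on $\partial\Omega_j\times(0,\infty)$, so a strictly negative upper bound for $K$ cannot be propagated by comparison. This is precisely why the paper introduces the second family of approximations \eqref{vk-boundary-m-problem} with \emph{time-dependent} boundary data $v_0e^{mt^2+2t}$: then $p=v_t/v=-2K$ equals $2mt+2\ge 2/(2t+1)$ on the lateral boundary and is $\ge 2$ at $t=0$ by \eqref{K-time0-upper-bd}, so comparison of $p$ with the exact solution $2/(2t+1)$ of \eqref{pkm-eqn} goes through and survives the limits $m\to\infty$, $k\to\infty$. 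Without some such device your scheme yields neither \eqref{K-upper-bd} nor property (iii) nor the quantitative upper bound in the statement.

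Two smaller points. Uniqueness by ``the maximum principle applied to the difference of two solutions'' cannot work: the maximality property \eqref{tilde-g-and-g-compare} shows there are in general many Ricci flows out of an incomplete $g_0$ (e.g.\ ones that stay incomplete), so uniqueness holds only within the class satisfying \eqref{K-lower-bd} and \eqref{K-upper-bd} and the proof must exploit those bounds; the paper does not reprove this step but quotes Theorem 4.1 of \cite{GiT1}, stated here as Theorem \ref{thm-a2}. Conversely, the difficulty you single out as the main obstacle --- losing completeness in the limit --- is handled cheaply in the paper: Hui's existence theorem gives the lower bound $v_k\ge C_kt/\phi_k(x)$ with $C_k$ bounded below uniformly in $k$ by \eqref{ck-bd}, where $\phi_k$ is the first Dirichlet eigenfunction; this blows up at $\partial\mcD$, passes to the limit, and then \eqref{u-lower-bd} (hence completeness) follows from \eqref{K-lower-bd} by an elliptic comparison at each fixed time.
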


By abuse of notation we will write $K[u]=K[g]$ for the Gauss curvature of 
a metric of the form $g=e^{2u}\delta_{ij}$. As observed by G.~Giesen and 
P.M.~Topping \cite{GiT1} in order to prove Theorem \ref{main-thm} it suffices 
to assume that $M=\mathcal{D}$ is a unit disk in $\R^2$ and 
$g_0=e^{2u_0}\delta_{ij}$ is a conformal metric on $\mathcal{D}$. Then by 
scaling Theorem \ref{main-thm} is equivalent to the following two theorems.

\begin{thm}\label{thm-a1}(cf. Theorem 3.1 and Lemma 2.2 of \cite{GiT1})
Let $g_0=e^{2u_0}\delta_{ij}$ be a smooth conformal metric on the unit disk 
$\mathcal{D}$ with \begin{equation}\label{K-time0-upper-bd}
K[u_0]\le -1. 
\end{equation}
Then there exists a smooth solution $g(t)=e^{2u}\delta_{ij}$ of 
\eqref{Ricci-eqn} in $\mathcal{D}\times [0,\infty)$ with $g(0)=g_0$ 
such that $g(t)$ is complete for every $t>0$ with the Gauss curvature $K[u(t)]$
satisfying
\begin{equation}\label{K-lower-bd}
K[u(t)]\ge -\frac{1}{2t}\quad\forall t>0,
\end{equation}
\begin{equation}\label{K-upper-bd}
K[u(t)]\le-\frac{1}{2t+1}\quad\forall t\ge 0,
\end{equation}
\begin{equation}\label{u-lower-bd}
u(x,t)\ge\log\frac{2}{1-|x|^2}+\frac{1}{2}\log (2t)\quad\forall x\in\mcD,
t>0,
\end{equation}
\begin{equation}\label{u-upper-bd}
u(x,t)\le\log\frac{2}{1-|x|^2}+\frac{1}{2}\log (2t+1)\quad\forall x\in\mcD,
t\ge 0,
\end{equation}
and 
\begin{equation}\label{u-u0-ineqn}
u(x,t)\ge u_0(x)+\frac{1}{2}\log(2t+1)\quad\mbox{ in }\mcD\times [0,\infty). 
\end{equation}
Moreover $g(t)$ is maximal in the sense that if $\4{g}(t)$ for $t\in [0,\3]$ 
is another Ricci flow with $\4{g}(0)=g_0$ for some $\3>0$, then
\begin{equation}\label{tilde-g-and-g-compare}
\4{g}(t)\le g(t)\quad\forall 0\le t\le\3.
\end{equation}
\end{thm}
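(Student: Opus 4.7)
The plan is to reduce the Ricci flow \eqref{Ricci-eqn} for a conformal metric $g=e^{2u}\delta_{ij}$ to the scalar equation $u_t=e^{-2u}\Delta u=-K[u]$, which is uniformly parabolic wherever $u$ is locally bounded, and to construct $u$ as the limit of Dirichlet solutions on an exhaustion $\mcD_n=\{|x|<r_n\}$, $r_n\nearrow 1$. Setting $H(x):=\log\frac{2}{1-|x|^2}$, the two Poincar\'e-type Ricci flows $H+\tfrac12\log(2t+1)$ and $H+\tfrac12\log(2t)$ (with curvatures $-(2t+1)^{-1}$ and $-(2t)^{-1}$) are the natural upper and lower barriers. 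On each $\mcD_n$ I would solve $\1_t u_n=e^{-2u_n}\Delta u_n$ with $u_n(\cdot,0)=u_0$ and lateral datum
\begin{equation*}
\psi(x,t):=\max\{u_0(x),\,H(x)+\tfrac12\log(2t)\},
\end{equation*}
suitably smoothed near the initial time. By construction $\psi$ is compatible with $u_0$ at $t=0$, is sandwiched between the two Poincar\'e barriers on $\1\mcD_n$ (the upper sandwich uses $u_0\le H$, obtained by applying a maximum principle to $u_0-H$ using $K[u_0]\le-1=K[H]$ and the boundary blow-up of $H$), and satisfies $\psi|_{\1\mcD_n}\to+\infty$ as $n\to\infty$ for every $t>0$. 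Standard quasilinear parabolic theory on the smooth bounded domain $\mcD_n$ supplies a smooth $u_n$; this is the only existence input required, and neither Shi's theorem nor pseudolocality is used.

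All a priori bounds will come from the parabolic maximum principle. The Gauss curvature $K=K[u_n]$ evolves by $\1_t K=e^{-2u_n}\Delta K+2K^2$, so comparison with the spatially constant ODE-solutions $-(2t+1)^{-1}$ and $-(2t)^{-1}$ of $\dot\varphi=2\varphi^2$ yields \eqref{K-upper-bd} and \eqref{K-lower-bd} for each $u_n$ (the lower one exploiting the blow-up of $-1/(2t)$ as $t\to 0^+$); integrating $u_t=-K\ge(2t+1)^{-1}$ in $t$ delivers \eqref{u-u0-ineqn}. For \eqref{u-upper-bd} I would compare $u_n$ with the Ricci flow $H+\tfrac12\log(2t+1)$: one has $u_0\le H$ at $t=0$ and $\psi\le H+\tfrac12\log(2t+1)$ on $\1\mcD_n$, so the parabolic comparison principle closes the argument. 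For \eqref{u-lower-bd} I would compare $u_n$ with $v:=H+\tfrac12\log(2t)$ on $\mcD_n\times[\delta,T]$: for each fixed $n$, $\delta>0$ can be chosen so small that $v(\cdot,\delta)\le u_n(\cdot,\delta)$ on $\2{\mcD_n}$ (possible because $H$ is bounded on $\2{\mcD_n}$ while $\tfrac12\log(2\delta)\to-\infty$), while $v\le\psi=u_n$ holds on $\1\mcD_n$ by construction; the comparison principle gives $u_n\ge v$, and $\delta\to 0^+$ removes the shift.

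With these $n$-independent bounds the equation remains uniformly parabolic on compact subsets of $\mcD\times[0,\infty)$, so interior Schauder estimates produce a smooth subsequential limit $u$ inheriting \eqref{K-lower-bd}--\eqref{u-u0-ineqn}. Completeness of $g(t)$ for $t>0$ is immediate from \eqref{u-lower-bd}, which forces any radial path to $\1\mcD$ to have infinite length. For maximality \eqref{tilde-g-and-g-compare} a further comparison argument is needed: any competitor $\4u$ is locally bounded on $\mcD\times[0,\3]$, while the Dirichlet data in the approximation scheme can be enlarged to dominate $\4u$ on $\1\mcD_n$ for $n$ large, so the standard comparison principle gives $\4u\le u_n$ on $\mcD_n$, and passing $n\to\infty$ yields $\4u\le u$.

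The main obstacle is the lower bound \eqref{u-lower-bd}: the barrier $v=H+\tfrac12\log(2t)$ has initial value $+\infty$, so the maximum principle cannot be invoked at $t=0$ directly. The workaround above starts the comparison at $t=\delta>0$, exploits that $v(\cdot,\delta)\to-\infty$ uniformly on $\2{\mcD_n}$ while $u_n(\cdot,\delta)$ stays bounded, and then sends $\delta\to 0^+$; keeping the resulting estimate clean as $n\to\infty$ is the delicate point, since it is precisely the mechanism producing the instantaneous completion of an incomplete initial metric under the Ricci flow.
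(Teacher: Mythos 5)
Your scheme---exhaust $\mcD$ by disks $\mcD_n$, solve Dirichlet problems with lateral data $\psi=\max\{u_0,H+\tfrac12\log(2t)\}$, and pass to the limit---differs from the paper's route through $v=e^{2u}$ and the logarithmic fast diffusion equation $v_t=\Delta\log v$, and several of your comparisons (the Yau--Schwarz bound $u_0\le H$, the parabolic comparisons giving \eqref{u-upper-bd} and \eqref{u-lower-bd}, the $\delta\to0^+$ device) are sound. The step that fails is the curvature upper bound \eqref{K-upper-bd}. To run the maximum principle on $\1_tK=e^{-2u_n}\Delta K+2K^2$ in $\mcD_n\times(0,T)$ you must verify $K[u_n]\le -1/(2t+1)$ on the \emph{whole} parabolic boundary. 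On the lateral boundary the equation $\1_tu_n=-K[u_n]$ holds up to the closure, so $K[u_n]=-\1_t\psi$ there; but on the portion of $\1\mcD_n\times(0,\infty)$ where the maximum defining $\psi$ is attained by $u_0$, namely $0<t<\tfrac12e^{2(u_0(x)-H(x))}$ (a nonempty time interval for every $x\in\1\mcD_n$ since $u_0\le H$), this gives $K[u_n]=0>-1/(2t+1)$, and the comparison does not close. This is precisely what the paper's second layer of approximation is for: it prescribes lateral data $v=v_0e^{mt^2+2t}$ (i.e. $u=u_0+\tfrac m2t^2+t$), so that $p=v_t/v=-2K$ equals $2mt+2\ge 2/(2t+1)$ on the entire parabolic boundary, applies the maximum principle to the clean equation $p_t=e^{-v}\Delta p-p^2$, and only afterwards lets $m\to\infty$ to recover the infinite-boundary-data solution. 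The failure of \eqref{K-upper-bd} at the level of the $u_n$ also undercuts two downstream steps: \eqref{u-u0-ineqn}, which you integrate from $u_t\ge(2t+1)^{-1}$, and the uniform-in-$n$ local lower bound on $u_n$ near $t=0$ needed for compactness up to $t=0$ and for the limit to attain $g_0$ (the crude bound $u_n\ge\inf_{\2{\mcD_n}}u_0$ is not uniform in $n$ when $u_0$ is unbounded below near $\1\mcD$; the paper substitutes the localized comparison with translated compactly supported solutions in Lemma~\ref{v-smooth}).

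The maximality claim \eqref{tilde-g-and-g-compare} is also incomplete. A competitor $\4u$ need not satisfy $\4u\le\psi$ on $\1\mcD_n$, and once you ``enlarge the Dirichlet data to dominate $\4u$'' you are comparing $\4u$ with a \emph{different} family of approximations; you have no argument that this enlarged family still converges to the same $u$, because your proof of \eqref{u-upper-bd} used precisely that $\psi\le H+\tfrac12\log(2t+1)$ on $\1\mcD_n$, which the enlarged data violates. The paper sidesteps both problems at once: its approximants have lateral value $+\infty$, so they dominate every competitor for free (via the comparison theorems of \cite{Hu4}), and it proves \eqref{u-lower-bd} and \eqref{u-upper-bd} by an \emph{elliptic} comparison at each fixed time, using only the curvature bounds $-1/(2t)\le K\le -1/(2t+1)$ and the boundary blow-up of the Poincar\'e-type barriers, with no reference to the lateral data of the approximation. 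If you want to salvage your construction, you should either adopt boundary data of the paper's form so that $-2K=\1_t(2u)$ is controlled from below on all of $\1_pQ$, or derive \eqref{u-upper-bd} and the maximality by the data-independent elliptic argument.
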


\begin{thm}\label{thm-a2}(cf. Theorem 4.1 of \cite{GiT1})
Let $e^{2u_0}\delta_{ij}$ be a smooth metric on the unit disk $\mathcal{D}$ 
satisfying the upper curvature bound \eqref{K-time0-upper-bd}. Let 
$e^{2u}\delta_{ij}$ be a solution of \eqref{Ricci-eqn} in 
$\mathcal{D}\times (0,\infty)$ with $u(\cdot,0)=u_0$ which satisfies 
\eqref{K-lower-bd} and \eqref{K-upper-bd}. Then $u$ is unique among 
solutions that satisfy  \eqref{K-lower-bd} and \eqref{K-upper-bd}.
\end{thm}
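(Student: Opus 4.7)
\noindent\emph{Proof proposal.}
The plan is to compare any solution $u$ satisfying \eqref{K-lower-bd}--\eqref{K-upper-bd} with the maximal solution $u^*$ supplied by Theorem~\ref{thm-a1} and show $u=u^*$. By the maximality property \eqref{tilde-g-and-g-compare} we already have $u\le u^*$, so $v:=u^*-u\ge 0$ with $v(\cdot,0)=0$. A short computation starting from $u_t=e^{-2u}\Delta u$, $u^*_t=e^{-2u^*}\Delta u^*$, and $\Delta u^*=-K[u^*]e^{2u^*}$ gives
\begin{equation*}
v_t = e^{-2u}\Delta v + K[u^*]\bigl(e^{2v}-1\bigr).
\end{equation*}
Since $v\ge 0$ forces $e^{2v}-1\ge 0$ and $K[u^*]<0$ by \eqref{K-upper-bd}, the reaction term is non-positive. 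At any positive interior spacetime maximum of $v$ the PDE combined with $\Delta v\le 0$ and $v_t\ge 0$ would force $K[u^*](e^{2v}-1)=0$, hence $v=0$, a contradiction.

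To make this maximum principle argument work on the open disk I first collect the a priori estimates available for any admissible $u$. Integrating $u_t=-K[u]\ge\tfrac{1}{2t+1}$ in time yields $u(x,t)\ge u_0(x)+\tfrac{1}{2}\log(2t+1)$, which is \eqref{u-u0-ineqn}, and the Ahlfors--Schwarz lemma applied to the rescaled metric $u(\cdot,t)-\tfrac{1}{2}\log(2t+1)$, whose Gauss curvature is $\le-1$, gives $u(x,t)\le\log\tfrac{2}{1-|x|^2}+\tfrac{1}{2}\log(2t+1)$. Both bounds hold for $u^*$ as well, so $0\le v\le\log\tfrac{2}{1-|x|^2}-u_0(x)$ throughout $\mcD\times[0,\infty)$.

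The main obstacle is that this bound on $v$ need not be bounded near $\partial\mcD$ when $g_0$ is incomplete, while the diffusion coefficient $e^{-2u}$ also degenerates there, so the spatial maximum of $v(\cdot,t)$ could in principle escape to the boundary. I plan to overcome this by first upgrading the lower bound on $u$ to the sharper bound $u\ge\log\tfrac{2}{1-|x|^2}+\tfrac{1}{2}\log(2t)$ that \eqref{u-lower-bd} records for $u^*$: the auxiliary function $\psi:=u-\log\tfrac{2}{1-|x|^2}-\tfrac{1}{2}\log(2t)$ blows up to $+\infty$ as $t\to 0^+$ and satisfies
\begin{equation*}
\psi_t = e^{-2u}\Delta\psi + \frac{1}{2t}\bigl(e^{-2\psi}-1\bigr),
\end{equation*}
whose right-hand side is strictly positive wherever $\psi<0$ and $\Delta\psi\ge 0$, forbidding a negative interior minimum. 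Carrying out this minimum principle on an exhaustion $\mcD_\rho=\{|x|<\rho\}$, $\rho\nearrow 1$, with suitable barriers controlling $\psi$ at $\partial\mcD_\rho$, should give $\psi\ge 0$ throughout $\mcD\times(0,\infty)$. Combined with the Ahlfors--Schwarz upper bound for $u^*$ this squeezes $v$ into the uniform band $0\le v\le\tfrac{1}{2}\log\tfrac{2t+1}{2t}$, after which the maximum principle for $v$ applies cleanly on each $\mcD_\rho\times[t_1,t_2]$ and, passing to the limits $\rho\to 1$ and $t_1\to 0^+$, yields $v\equiv 0$, hence $u=u^*$. Uniqueness among admissible solutions follows at once.
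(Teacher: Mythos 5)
First, a structural remark: the paper itself does not prove Theorem \ref{thm-a2}. It reproves only the existence half (Theorem \ref{thm-a1}) and Theorems \ref{thm-a4}--\ref{thm-a5}, and simply cites Theorem 4.1 of \cite{GiT1} for the uniqueness statement, so there is no internal proof to compare your argument against. Taken on its own terms, the parts of your proposal that are computed are correct: the reduction to showing $u=u^*$ for the maximal solution $u^*$ of Theorem \ref{thm-a1}, the inequality $u\le u^*$ from \eqref{tilde-g-and-g-compare}, the evolution equation $v_t=e^{-2u}\Delta v+K[u^*]\bigl(e^{2v}-1\bigr)$ for $v=u^*-u\ge 0$, and the sign analysis at a positive interior space-time maximum all check out.

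The gap is precisely at the step you flag as the main obstacle, and it is not filled. You need \eqref{u-lower-bd} for an \emph{arbitrary} admissible solution $u$, but the minimum principle for $\psi=u-\log\frac{2}{1-|x|^2}-\frac{1}{2}\log(2t)$ does not close: while $\psi\to+\infty$ as $t\to 0^+$, as $|x|\to 1$ the only available information is $u\ge u_0+\frac{1}{2}\log(2t+1)$, whence $\psi\ge u_0-\log\frac{2}{1-|x|^2}+\frac{1}{2}\log\frac{2t+1}{2t}\to-\infty$ whenever $g_0$ is incomplete (e.g. $u_0$ bounded). The ``suitable barriers at $\partial\mcD_\rho$'' are therefore exactly the missing ingredient, and they cannot be manufactured from the two curvature bounds by elementary comparison alone; this is the point at which \cite{GiT1} has to invoke Yau's Schwarz lemma for the identity map together with completeness of one of the metrics, and it is the real content of the uniqueness theorem. (The paper's proof of \eqref{u-lower-bd} for $u^*$ works only because $u^*$ blows up at $\partial\mcD$ by construction, so $\overline{\{u^*<\psi_\delta\}}\subset\mcD$; that mechanism is unavailable for a general admissible $u$.) There is a second, smaller gap at the end: even granting the band $0\le v\le\frac{1}{2}\log\frac{2t+1}{2t}$, the limits $\rho\to 1$ and $t_1\to 0^+$ do not yield $v\equiv 0$ as stated, because the lateral bound on $\partial\mcD_\rho\times[t_1,t_2]$ does not tend to $0$ as $\rho\to 1$, and the initial-slice bound blows up as $t_1\to 0^+$ (the convergence $v(\cdot,t)\to 0$ is only locally uniform). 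Some further device --- e.g. the time-shift comparison of $u(\cdot,t)$ with $u^*(\cdot,t+\3)$, or exploiting the decay forced by the reaction term $K[u^*](e^{2v}-1)\le-\frac{2v}{2t+1}$ --- is needed to conclude.
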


The proof of Theorem 3.1 and Lemma 2.2 of \cite{GiT1} 
uses the results of \cite{T}, the Schwartz Lemma of S.T.~Yau \cite{Y}, 
and the difficult existence theorem of W.X.~Shi \cite{S1} for Ricci 
flow on complete non-compact manifolds. In this paper we will give a 
simple proof of Theorem \ref{thm-a1} using 
the results of K.M.~Hui in \cite{Hu3} and \cite{Hu4}. We will assume 
that $M=\mathcal{D}\subset\R^2$ is a unit disk for the rest of the paper. 
Note that for a metric $g$ on a 2-dimensional manifold, $\mbox{Ric}[g]=K[g]g$. 
We will also give simple proofs of the following extension of the
existence results of G.~Giesen and P.M.~Topping \cite{GiT2} and a 
special case of Theorem 1.1 of \cite{T} without using the existence theorem of 
W.X.~Shi \cite{S1} and the pseudolocality Theorem of G.~Perelman \cite{P1} 
on Ricci flow.

\begin{thm}\label{thm-a4}(cf. Theorem 3.1 of \cite{GiT2} and Theorem 1.1 
of \cite{T})
Let $g_0=e^{2u_0}\delta_{ij}$ be a smooth (possible incomplete) Riemannian 
metric on $\mcD$. Then there exists a maximal instantaneous smooth complete
Ricci flow $g(t)=e^{2u}\delta_{ij}$ on $\mcD$ for all time $t\in[0,\infty)$ with 
$g(0)=g_0$ which satisfies \eqref{K-lower-bd} and \eqref{u-lower-bd}. 
Suppose in addition the Gauss curvature satisfies
\begin{equation}\label{K-time0-upper-bd2}
K[g_0]\le K_0
\end{equation}
for some constant $K_0\ge 0$. Then the following holds.
\begin{enumerate}
\item[(i)] If $K_0>0$, then
\begin{equation}\label{K-upper-bd2}
K[u(t)]\le\frac{1}{K_0^{-1}-2t}\quad\forall 0\le t<(2K_0)^{-1}
\end{equation} 
and 
\begin{equation}\label{u-lower-bd1}
u(x,t)\ge u_0(x)+\frac{1}{2}\log(1-2K_0t)\quad\forall 0\le t<(2K_0)^{-1}.
\end{equation} 
\item[(ii)] If $K_0=0$, then
\begin{equation}\label{K-upper-bd3}
K[u(t)]\le 0\quad\forall t\ge 0
\end{equation} 
and 
\begin{equation}\label{u-lower-bd3}
u(x,t)\ge u_0(x)\quad\forall t\ge 0.
\end{equation} 
\end{enumerate}
\end{thm}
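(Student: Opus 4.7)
The plan is to obtain the maximal instantaneously complete Ricci flow as a monotone limit of the flows provided by Theorem \ref{thm-a1}, and then to deduce the additional estimates \eqref{K-upper-bd2}--\eqref{u-lower-bd3} by direct comparison with explicit barriers. First I would construct a decreasing sequence of smooth conformal factors $u_{0,n}$ on $\mcD$ with $K[u_{0,n}]\le -1$ and $u_{0,n}\searrow u_0$ locally uniformly on $\mcD$, for instance by taking a mollified maximum of $u_0$ with hyperbolic-type factors $\log(2/(1-|x|^2))+c_n$, $c_n\to-\infty$, which forces the added term to dominate only on a shrinking collar of $\1\mcD$. Applying Theorem \ref{thm-a1} to each $u_{0,n}$ produces a smooth maximal Ricci flow $u_n(x,t)$ on $\mcD\times[0,\infty)$ satisfying the uniform estimates \eqref{K-lower-bd}--\eqref{u-upper-bd}. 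A comparison principle for the logarithmic fast diffusion equation $v_t=\Delta\log v$, the form taken by the conformal Ricci flow under the substitution $v=e^{2u}$ and the setting of the results of K.M.~Hui in \cite{Hu3,Hu4}, transfers the monotonicity of the initial data to the flows, giving $u_{n+1}\le u_n$. Passing to the monotone limit $u=\lim_n u_n$, standard parabolic regularity yields smoothness and strict positivity of $v=e^{2u}$, so that $g(t)=e^{2u}\delta_{ij}$ is the desired smooth complete Ricci flow.

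Next I would verify the bounds. The estimates \eqref{K-lower-bd} and \eqref{u-lower-bd} for $u$ pass to the limit from the uniform bounds on the $u_n$ in Theorem \ref{thm-a1}, while maximality of $u$ among Ricci flows with initial data $g_0$ is inherited from the maximality of each $u_n$. For the extra bounds under $K[g_0]\le K_0$, the key computation is that the explicit ansatz
\begin{equation*}
\phi(x,t):=u_0(x)+\tfrac12\log(1-2K_0 t),\qquad 0\le t<(2K_0)^{-1}
\end{equation*}
(or $\phi\equiv u_0$ when $K_0=0$) satisfies
\begin{equation*}
\phi_t-e^{-2\phi}\Delta\phi=\frac{K[u_0]-K_0}{1-2K_0 t}\le 0
\end{equation*}
by the hypothesis $K[u_0]\le K_0$, so $\phi$ is a subsolution of $u_t=e^{-2u}\Delta u$ with $\phi(\cdot,0)=u_0=u(\cdot,0)$; the comparison principle then gives $u\ge\phi$ and hence \eqref{u-lower-bd1} and \eqref{u-lower-bd3}. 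For the curvature upper bounds \eqref{K-upper-bd2} and \eqref{K-upper-bd3}, I would apply the scalar maximum principle to the evolution equation $K_t=\Delta_g K+2K^2$ which $K$ satisfies under Ricci flow in two dimensions, comparing with the ODE solution $\2 K(t)=1/(K_0^{-1}-2t)$ (respectively $\2 K\equiv 0$ when $K_0=0$), using completeness of $(\mcD,g(t))$ for $t>0$ to justify the maximum principle on the non-compact manifold.

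The main obstacle is the construction of the approximating sequence $u_{0,n}$ in Step 1 with the right combination of smoothness, monotonicity in $n$, local convergence to $u_0$, and the curvature bound $K[u_{0,n}]\le -1$: the naive pointwise maximum of $u_0$ with a hyperbolic factor must be smoothed without destroying the curvature constraint, which is the main technical point. A secondary difficulty is the justification of both comparison arguments on the non-compact, instantaneously complete surface $(\mcD,g(t))$, where one needs to control behaviour at $\1\mcD$; this is where the framework of \cite{Hu3,Hu4} for the logarithmic fast diffusion on $\mcD$ enters, and where the boundary growth forced by \eqref{u-lower-bd} plays a role.
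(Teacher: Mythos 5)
Your Step 1 cannot work as stated: a sequence of smooth conformal factors $u_{0,n}$ with $K[u_{0,n}]\le -1$ that decreases (or even just converges locally uniformly) to $u_0$ exists \emph{only if} $K[u_0]\le -1$ already. Indeed, $K[u_{0,n}]\le -1$ means $\Delta u_{0,n}\ge e^{2u_{0,n}}$ pointwise, hence distributionally; local uniform convergence gives $\Delta u_{0,n}\to\Delta u_0$ in the sense of distributions and $e^{2u_{0,n}}\to e^{2u_0}$ locally uniformly, so $\Delta u_0\ge e^{2u_0}$, i.e.\ $K[u_0]\le -1$. Your own candidate construction exhibits the problem: wherever the mollified maximum equals $u_0$ (i.e.\ off the shrinking collar), the curvature is $K[u_0]$, which for a general metric violates the constraint. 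So the obstacle you flag as ``the main technical point'' is in fact an impossibility, and the reduction to Theorem \ref{thm-a1} collapses. The paper avoids this entirely: the existence part of the proof of Theorem \ref{thm-a1} (Theorem \ref{hui-result1} on $B_{1-1/k}$ with infinite boundary data, Lemma \ref{v-smooth}, the Aronson--B\'enilan inequality \eqref{aronson-benilan} giving \eqref{K-lower-bd}, and the elliptic comparison with $\psi_\delta$ giving \eqref{u-lower-bd}) never uses the hypothesis $K[u_0]\le -1$, so that construction applies verbatim to an arbitrary smooth $g_0$; no approximation of the initial data is needed.

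Your second step also leaves a real gap. Applying the scalar maximum principle to $K_t=\Delta_gK+2K^2$ on the complete non-compact surface $(\mcD,g(t))$ requires a priori control of $K$ at infinity (an Omori--Yau or growth hypothesis), which is precisely the upper bound you are trying to prove; completeness alone does not suffice, and at $t=0$ the metric is not even complete. The paper sidesteps this by running the maximum principle for $p_{k,m}=\partial_tv_{k,m}/v_{k,m}=-2K[u_{k,m}]$ on the \emph{compact} approximating cylinders $\overline{B}_{1-1/k}\times[0,T_1]$, where the Dirichlet data $v_0e^{mt^2+2t}$ makes $p_{k,m}=2mt+2\ge -1/((2K_0)^{-1}-t)$ explicit on the parabolic boundary, and then lets $m\to\infty$, $k\to\infty$ to obtain \eqref{K-upper-bd2} and \eqref{K-upper-bd3}; the lower bounds \eqref{u-lower-bd1} and \eqref{u-lower-bd3} then follow by integrating $u_t=-K[u]$ in time rather than by your (correct, but again comparison-principle-dependent) subsolution $u_0+\frac12\log(1-2K_0t)$. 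To repair your argument you would need either to adopt this compact-exhaustion scheme or to supply a non-compact maximum principle justified by independent curvature control.
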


\begin{thm}\label{thm-a5}(cf. \cite{DP1}, \cite{Hu2}, \cite{Hs1},
Theorem 1.1 of \cite{T} and Theorem 3.2 of \cite{GiT2})
Let $g_0=e^{2u_0}\delta_{ij}$ be a smoooth metric on $\R^2$ which need not 
be complete. Then there exists a maximal instantaneous smooth complete 
Ricci flow $g(t)=e^{2u}\delta_{ij}$ on $\R^2$ for
$t\in [0,T)$ with $g(0)=g_0$ which satisfies \eqref{K-lower-bd} and
for any $0<T_1<T$ and $r_0>1$ there exists a constant $C>0$ such that
\begin{equation}\label{u-lower-bd2}
u(x,t)\ge -C-\log (|x|\log |x|)+\frac{1}{2}\log (2t)\quad\forall
|x|\ge r_0,0\le t\le T_1
\end{equation} 
where 
\begin{equation}\label{T-value}
T=\left\{\begin{aligned}
&\frac{\mbox{Vol}_{g_0}(\R^2)}{4\pi}\qquad\quad\mbox{ if }
\mbox{Vol}_{g_0}(\R^2)<\infty\\
&\infty\qquad\qquad\qquad\mbox{ if }\mbox{Vol}_{g_0}(\R^2)=\infty
\end{aligned}\right.
\end{equation} 
and
\begin{equation}\label{vol}
\mbox{Vol}_{g(t)}(\R^2)=\left\{\begin{aligned}
&4\pi (T-t)\quad\forall 0\le t<T\quad\mbox{ if }\mbox{Vol}_{g_0}(\R^2)<\infty\\
&\infty\qquad\qquad\forall t>0\qquad\quad\mbox{ if }
\mbox{Vol}_{g_0}(\R^2)=\infty.
\end{aligned}\right.
\end{equation} 
If in addition the Gauss curvature satisfies \eqref{K-time0-upper-bd2}
for some constant $K_0\ge 0$, then the following holds.
\begin{enumerate}
\item[(i)] If $K_0>0$, then \eqref{K-upper-bd2} and 
\eqref{u-lower-bd1} holds on $\R^2$ for any $0\le t<(2K_0)^{-1}$ and
$T\ge(2K_0)^{-1}$.
\item[(ii)] if $K_0=0$, then \eqref{K-upper-bd3} and \eqref{u-lower-bd3} 
holds on $\R^2$ for any $t\ge 0$ and $T=\infty$.
\end{enumerate}
\end{thm}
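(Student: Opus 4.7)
The plan is to construct the Ricci flow on $\R^2$ by exhaustion, using Theorem \ref{thm-a4} on a nested family of disks and passing to a monotone limit.

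\textbf{Exhaustion and monotone limit.} For each $R>1$, let $\mcD_R\subset\R^2$ denote the disk of radius $R$. After an obvious rescaling, Theorem \ref{thm-a4} supplies a maximal instantaneous smooth complete Ricci flow $g_R(t)=e^{2u_R}\delta_{ij}$ on $\mcD_R$, for $t\in[0,\infty)$, with $g_R(0)=g_0|_{\mcD_R}$, satisfying the scaled analogues of \eqref{K-lower-bd} and \eqref{u-lower-bd}. For $R<R'$, the restriction $u_{R'}|_{\mcD_R\times[0,\3]}$ is another Ricci flow on $\mcD_R$ with initial datum $u_0|_{\mcD_R}$, so maximality gives $u_{R'}\le u_R$ on $\mcD_R$. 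Hence $\{u_R\}$ is monotone non-increasing in $R$, and the pointwise limit $u:=\lim_{R\to\infty}u_R$ exists provided we can show it does not equal $-\infty$.

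\textbf{Lower bound at infinity.} The key technical step is the barrier estimate \eqref{u-lower-bd2}, which must hold uniformly in $R$. I would construct a subsolution of $u_t=e^{-2u}\Delta u$ of the form $\phi(x,t)=-C-\log(|x|\log|x|)+\tfrac12\log(2t)$ on $\{|x|\ge r_0\}\times(0,T_1]$, choosing $C=C(T_1,r_0,g_0)$ large enough that $\phi$ is dominated by $u_0$ on $|x|=r_0$ at $t=0$ and so that $\phi$ behaves as $-\infty$ at the parabolic boundary relative to the complete $u_R$. The comparison principle (as in \cite{Hu3}, \cite{Hu4}) then yields $u_R\ge\phi$ on the common domain, and the bound \eqref{u-lower-bd2} passes to the limit. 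Together with the curvature lower bound in Theorem \ref{thm-a4}, which is inherited uniformly in $R$, standard parabolic interior estimates give $C^\infty_{loc}$ convergence $u_R\to u$ on $\R^2\times(0,T)$, with $u$ solving the Ricci flow equation and $g(t)=e^{2u}\delta_{ij}$ instantaneously complete by virtue of \eqref{u-lower-bd2}.

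\textbf{Volume evolution and the extinction time.} From $u_t=-K$ one computes $\tfrac{d}{dt}\mathrm{Vol}_{g(t)}(\R^2)=-2\int_{\R^2}K\,dA_{g(t)}$. For the flow $g(t)$ for $t>0$, which is complete with the asymptotic behaviour from \eqref{u-lower-bd2}, the Cohn-Vossen equality $\int_{\R^2}K\,dA_{g(t)}=2\pi\chi(\R^2)=2\pi$ holds because the lower curvature bound \eqref{K-lower-bd} and the precise growth in \eqref{u-lower-bd2} supply the required decay at infinity. Integration then gives $\mathrm{Vol}_{g(t)}(\R^2)=\mathrm{Vol}_{g_0}(\R^2)-4\pi t$, which identifies $T=\mathrm{Vol}_{g_0}(\R^2)/(4\pi)$ when the initial volume is finite and $T=\infty$ otherwise, yielding \eqref{T-value} and \eqref{vol}. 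Maximality of $u$ follows from the maximality of each $u_R$ and the monotone limit.

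\textbf{Upper curvature bounds.} Under \eqref{K-time0-upper-bd2} with $K_0>0$, the standard maximum principle applied to the evolution equation $K_t=\Delta_{g(t)}K+2K^2$ (applied on each $\mcD_R$, where regularity and completeness justify its use, and then passed to the limit) gives \eqref{K-upper-bd2}. Integrating $u_t=-K\ge -1/(K_0^{-1}-2t)$ in time yields \eqref{u-lower-bd1}. The case $K_0=0$ is analogous and produces \eqref{K-upper-bd3}--\eqref{u-lower-bd3}. The main obstacle I anticipate is carrying out the barrier comparison for \eqref{u-lower-bd2} rigorously in the presence of the complete boundary behaviour of $u_R$ on $\partial\mcD_R$, and simultaneously checking that the decay at infinity of the limit flow is sharp enough for Cohn-Vossen equality to furnish the volume identity.
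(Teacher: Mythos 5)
Your overall architecture (exhaustion by disks, monotonicity from maximality, a cusp barrier at infinity) is reasonable and partly parallels the paper, but the paper does not derive the extinction time and volume law this way: it \emph{imports} the existence of the unique maximal solution of $v_t=\Delta\log v$ on $\R^2$ with the volume identity \eqref{vol} and extinction time \eqref{T-value} from \cite{DP1}, \cite{Hu2}, \cite{Hs1}, and gets \eqref{u-lower-bd2} from Theorem 3.4 of \cite{ERV2}; the exhaustion by the Dirichlet-at-infinity solutions $\4{v}_k$ on $B_k$ is then used only to identify the maximal solution with the decreasing limit (so as to transfer smoothness at $t=0$, instantaneous completeness, and the curvature upper bounds proved on bounded domains). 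The step in your proposal that genuinely fails as written is the volume computation: Cohn--Vossen gives only the inequality $\int_{\R^2}K\,dA_{g(t)}\le 2\pi\chi(\R^2)$ for a complete surface with integrable curvature, and the \emph{lower} bound \eqref{u-lower-bd2} on the conformal factor does not force equality --- equality requires knowing that the ends are exactly cusp-like (a matching \emph{upper} bound on $v$ at infinity), and establishing that the area decreases at rate exactly $4\pi$ is precisely the deep content of \cite{DP1} and \cite{Hu2} that the paper cites. Without it you get neither \eqref{vol} nor the value of $T$ in \eqref{T-value}.

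A second, related gap is circularity in your barrier argument. To anchor the subsolution $\phi$ on the inner boundary $|x|=r_0$ for $0\le t\le T_1$ you need $\inf_{|x|=r_0,\,t\le T_1}v>0$ for the decreasing limit $v=\lim_R e^{2u_R}$, i.e.\ you need to know in advance that the limit has not gone extinct before $T_1$; but determining the extinction time is exactly what the volume identity is supposed to deliver, and your construction provides no independent lower bound on compact sets for $t$ up to $\mathrm{Vol}_{g_0}(\R^2)/4\pi$. Finally, your use of the reaction--diffusion equation $K_t=\Delta_{g(t)}K+2K^2$ on each complete noncompact $(\mcD_R,g_R(t))$ requires a maximum principle at infinity without an a priori upper curvature bound (the quantity you are trying to bound); the paper avoids this by running the maximum principle for $p_{k,m}=\1_t v_{k,m}/v_{k,m}$ on the bounded Dirichlet approximations \eqref{vk-boundary-m-problem}, where the boundary data are explicit, and only then passing to the limit.
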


We start with some definitions. For any $r_1>0$, $T_1>0$, let $B_{r_1}
=\{x\in\R^2:|x|<r_1\}$, $Q_{r_1}=B_{r_1}\times (0,\infty)$, 
 $Q_{r_1}^{T_1}=B_{r_1}\times (0,T_1)$, and $\1_pQ_{r_1}
=(\1 B_{r_1}\times [0,\infty))\cup (\2{B}_{r_1}\times\{0\})$. For any set 
$A\subset\R^2$, let $\chi_A$ be the characteristic function of the set $A$. 
Note that for any metric of the form $e^{2u(x,t)}\delta_{ij}$ in $Q_{r_1}^T$, 
we have 
$$
K[u]=-\frac{\Delta u}{e^{2u}}
$$
and $e^{2u(x,t)}\delta_{ij}$ is a solution of \eqref{Ricci-eqn} in $Q_{r_1}^T$ 
if any only if
\begin{equation}\label{u-eqn}
\frac{\1 u}{\1 t}=e^{-2u}\Delta u\quad\mbox{ in }Q_{r_1}^T.
\end{equation}
where $\Delta$ is the Euclidean Laplacian on $\R^2$. Let  
\begin{equation}\label{u-v-relation}
v=e^{2u}.
\end{equation}
Then \eqref{u-eqn} is equivalent to 
\begin{equation}\label{v-eqn}
\frac{\1 v}{\1 t}=\Delta\log v\quad\mbox{ in }Q_{r_1}^T.
\end{equation}
Existence and various properties of \eqref{v-eqn} were studied by 
P.~Daskalopoulos and M.A.~Del Pino \cite{DP1}, S.H.~Davis, E.~Dibenedetto, 
and D.J.~Diller \cite{DDD}, J.R.~Esteban, A.~Rodriguez, and J.L.~Vazquez
\cite{ERV1}, \cite{ERV2}, S.Y.~Hsu \cite{Hs1}, K.M.~Hui \cite{Hu2}, 
etc. We refer the 
readers to the book \cite{DK} by P.~Daskalopoulos and C.E.~Kenig
and the book \cite{V} by J.L.~Vazquez for the recent results
on the equation \eqref{v-eqn}. 

For any $0\le v_0\in L_{loc}^1(B_{r_1})$, we say that $v$ is a solution of 
\begin{equation}\label{v-boundary-infty-problem}
\left\{\begin{aligned}
&v_t=\Delta\log v\qquad\text{ in }Q_{r_1}\\
&v>0\qquad\qquad\,\,\,\mbox{ in }Q_{r_1}\\
&v(x,t)=\infty\qquad\mbox{ on }\1 B_{r_1}\times (0,\infty)\\
&v(x,0)=v_{0}(x)\quad\mbox{in }B_{r_1}\end{aligned}\right.
\end{equation}
if $v$ is a classical solution of \eqref{v-eqn} in $Q_{r_1}$,
\begin{equation*}\label{v-bdary}
\lim_{y\to x}v(y,t)=\infty\quad\forall x\in\1 B_{r_1}, t>0,
\end{equation*}
\begin{equation*}\label{v-inf}
\inf_{B_{r_1}\times (t_1,t_2)}v(x,t)>0\quad\forall t_2>t_1>0,
\end{equation*}
and
\begin{equation}\label{v-time0-value}
\lim_{t\to 0}\|v(\cdot,t)-v_0\|_{L^1(K)}=0
\end{equation}
for any compact set $K\subset B_{r_1}$. 
We say that $v$ is a solution of \eqref{v-eqn} in $\R^2\times (0,T)$
with initial value $v_0$ if $v$ is a classical solution of \eqref{v-eqn} 
in $\R^2\times (0,T)$, $v>0$ in $\R^2\times (0,T)$, and
\eqref{v-time0-value} holds for any compact set $K\subset\R^2$. 

Note that for any solution $v$ of \eqref{v-eqn} the equation \eqref{v-eqn} 
is uniformly parabolic on $K$ for any compact set $K\subset Q_{r_1}^T$. 
Hence by the Schauder estimates \cite{LSU} and a bootrap argument 
$v\in C^{\infty}(Q_{r_1}^T)$ for any solution $v$ of
\eqref{v-eqn}. 
We first observe that by the same argument as the proof of Theorem 1.6
of \cite{Hu3} we have the following result.

\begin{thm}\label{hui-result1}(cf. Theorem 1.6 of \cite{Hu3}) 
Let $r_1>0$. Suppose $v_0\ge 0$ satisfies
\begin{equation*}\label{v-initial-cond}
v_0\in L_{loc}^p(B_{r_1})\quad\mbox{ for some constants }p>1. 
\end{equation*} 
Then exists a solution $v$ of \eqref{v-boundary-infty-problem}
which satisfies
\begin{equation}\label{aronson-benilan}
v_t\le\frac{v}{t}
\end{equation}
in $Q_{r_1}$ and
\begin{equation*}
v(x,t)\ge C\frac{t}{\phi (x)}\quad\mbox{ in }Q_{r_1}
\end{equation*}
for some constant $C>0$ depending on $\phi$ and $\lambda$ where 
$\phi$ and $\lambda$ are the first positive eigenfunction 
and the first positive eigenvalue of the Laplace operator 
$-\Delta$ on $B_{r_1}$ with $\|\phi\|_{L^2(B_{r_1})}=1$. 
\end{thm}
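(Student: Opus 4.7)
The plan is to build $v$ as a monotone limit of smooth approximations and to extract the pointwise lower bound from a subsolution barrier constructed from the principal Dirichlet eigenfunction of $-\Delta$ on $B_{r_1}$. First I would choose $v_{0,n}\in C^{\infty}(\2{B}_{r_1})$ with $v_{0,n}\equiv n$ near $\1 B_{r_1}$, $1/n\le v_{0,n}\le n$, and $v_{0,n}\to v_0$ in $L_{loc}^p(B_{r_1})$. Since \eqref{v-eqn} is uniformly parabolic whenever $v$ is bounded above and away from zero, the standard quasilinear parabolic theory of \cite{LSU} produces a unique smooth positive solution $v_n$ of \eqref{v-eqn} on $Q_{r_1}$ with boundary value $n$ on $\1 B_{r_1}\times(0,\infty)$ and initial value $v_{0,n}$; comparison shows the $v_n$ are monotone increasing in $n$.

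Next I would establish three uniform estimates. \textbf{(i)} Setting $F_n=\1_t v_n/v_n$, a direct computation gives $\1_t F_n-v_n^{-1}\Delta F_n=-F_n^2$. Since $G(t)=1/t$ solves $G_t+G^2=0$ with $G\to\infty$ as $t\to 0$ and $F_n\equiv 0$ on $\1 B_{r_1}\times(0,\infty)$ (because $v_n$ is constant there), the maximum principle applied to $F_n-G$ on $B_{r_1}\times[\tau,T]$ and then $\tau\to 0$ produces \eqref{aronson-benilan} for every $n$. \textbf{(ii)} Let $\phi,\lam$ be the first positive Dirichlet eigenfunction and eigenvalue of $-\Delta$ on $B_{r_1}$. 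The function $w(x,t)=Ct/\phi(x)$ satisfies
\[
\Delta\log w=-\Delta\log\phi=\lam+\frac{|\nabla\phi|^2}{\phi^2},
\]
so for $C$ sufficiently small depending only on $\phi$ and $\lam$ one has $w_t=C/\phi\le\Delta\log w$; hence the truncation $h_n(x,t)=\min(w(x,t),n)$ is a (weak) subsolution of \eqref{v-eqn} that is bounded by $n$ on $\1 B_{r_1}\times(0,\infty)$ and vanishes at $t=0$, so comparison yields $v_n\ge h_n$ and, upon $n\to\infty$, $v\ge Ct/\phi$ in $Q_{r_1}$. \textbf{(iii)} A local $L^{\infty}$ bound for $v_n$ on compacta $K\Subset B_{r_1}$ and $t\ge\tau>0$, uniform in $n$, follows from \eqref{aronson-benilan} together with a standard local integral estimate for \eqref{v-eqn} of the type collected in \cite{DK}.

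With these estimates in hand the equation is uniformly parabolic on every compact subset of $Q_{r_1}$, so Schauder theory produces smooth compact convergence of a subsequence of $v_n$ to a classical solution $v$ of \eqref{v-eqn} which inherits \eqref{aronson-benilan} and the pointwise lower bound $v\ge Ct/\phi$. Because $\phi$ vanishes on $\1 B_{r_1}$, this lower bound immediately forces $v(y,t)\to\infty$ as $y\to x\in\1 B_{r_1}$ for each $t>0$ and $\inf_{B_{r_1}\times(t_1,t_2)}v>0$ for all $0<t_1<t_2$.

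The delicate step is verifying the initial trace \eqref{v-time0-value} on each compact $K\subset B_{r_1}$. The Aronson--B\'enilan bound rewrites as $\1_t(v_n/t)\le 0$, which gives the easy half $\limsup_{t\to 0}\int_K v\le\lim_n\int_K v_{0,n}=\int_K v_0$, but the matching $\liminf$ requires a weak lower-semicontinuity argument against compactly supported cutoffs, combined with the $L_{loc}^p$ convergence of $v_{0,n}\to v_0$ and an energy estimate for $\log v_n$. This is the technical heart of \cite[Theorem~1.6]{Hu3}, and my proof would invoke that argument essentially verbatim.
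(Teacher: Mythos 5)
First, note that the paper does not actually prove this theorem: it is stated with the remark that it follows ``by the same argument as the proof of Theorem 1.6 of \cite{Hu3}'', so the only benchmark is the strategy of \cite{Hu3}. Your sketch follows that same strategy (approximate by problems with finite boundary data, derive the Aronson--B\'enilan inequality \eqref{aronson-benilan} from the equation $\1_tF_n=v_n^{-1}\Delta F_n-F_n^2$ for $F_n=\1_tv_n/v_n$, build the lower bound from the principal eigenfunction, pass to the limit by interior Schauder estimates, and defer the initial trace to \cite{Hu3}), and steps (i) and (iii) and the limiting argument are fine in outline.

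There is, however, a genuine gap in step (ii): the truncation $h_n=\min\bigl(Ct/\phi,\,n\bigr)$ is \emph{not} a weak subsolution of \eqref{v-eqn}. The minimum of two subsolutions is in general only a supersolution-type object at the interface: since $Ct/\phi$ crosses the constant $n$ from below as $x$ approaches $\1 B_{r_1}$, the function $\log h_n=\min(\log(Ct/\phi),\log n)$ has a concave kink on the set $\{Ct/\phi=n\}$, so $\Delta\log h_n$ picks up a \emph{negative} singular measure there and the inequality $\1_th_n\le\Delta\log h_n$ fails in the distributional (or viscosity) sense exactly where you need it. One cannot instead compare $v_n$ with $Ct/\phi$ on the region $\{Ct/\phi<n\}$, because on the interface one would need $v_n\ge n$, whereas the maximum principle gives $v_n\le n$. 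The standard repairs are either (a) to use the smooth bounded subsolution $w_{\3}(x,t)=Ct/(\phi(x)+\3)$, for which $\Delta\log w_{\3}=\lam\phi/(\phi+\3)+|\nabla\phi|^2/(\phi+\3)^2\ge C/(\phi+\3)=\1_tw_{\3}$ holds uniformly in $\3\in(0,1]$ once $C$ is small (using the Hopf lemma near $\1 B_{r_1}$), and to compare $v_n\ge w_{\3}$ on $B_{r_1}\times(0,T)$ for all $n\ge CT/\3$ before letting $n\to\infty$ and then $\3\to 0$; or (b) to run the comparison at the level of the limit solution $v$, which has infinite lateral boundary values, using the comparison results for solutions with singular boundary data (Corollary 1.8 and Lemma 2.9 of \cite{Hu4}), which is how the present paper handles all analogous comparisons. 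A minor further point: the monotonicity of $v_n$ in $n$ requires the approximations $v_{0,n}$ to be chosen monotone, which your construction ($1/n\le v_{0,n}\le n$ with $L^p_{loc}$ convergence) does not automatically provide.
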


\begin{lem}\label{v-smooth}
Let $r_1>0$. Suppose $v_0$ is a positive smooth function on $B_{r_1}$ 
and $v$ is a solution of \eqref{v-boundary-infty-problem}.
Then $v\in C^{\infty}(B_{r_1}\times [0,\infty))$. 
\end{lem}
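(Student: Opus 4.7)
The plan is to upgrade the already-noted interior smoothness $v \in C^{\infty}(Q_{r_1}^T)$ (obtained from Schauder estimates on compact subsets where $v_t = \Delta\log v$ is uniformly parabolic) to smoothness up to $t=0$ on compact subsets of $B_{r_1}$. I would fix $0 < r_2 < r_3 < r_1$ and $T>0$ and prove $v \in C^{\infty}(\overline{B}_{r_2}\times[0,T])$; letting $r_2 \nearrow r_1$ and $T \nearrow \infty$ then finishes. Since $v_0 \in C^{\infty}(B_{r_1})$ is positive, there exist constants $0 < \delta_0 \le M_0$ with $\delta_0 \le v_0 \le M_0$ on $\overline{B}_{r_3}$.

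The first main step is to establish a two-sided positive bound $0 < \delta \le v(x,t) \le M$ on $\overline{B}_{r_2}\times[0,T]$ with $\delta,M$ depending only on $v_0|_{\overline{B}_{r_3}}$, $r_2,r_3,T$. For the upper bound, I would build an explicit supersolution by solving the Dirichlet problem for \eqref{v-eqn} on $B_{r_3}\times(0,T]$ with constant initial value $M_0$ and a large constant boundary value, invoking classical quasilinear parabolic theory (the equation is uniformly parabolic once positivity is known) to produce a smooth bounded $V$ dominating $v$ on the parabolic boundary of $B_{r_3}\times(0,T]$; comparison yields $v \le V \le M$. For the lower bound, I combine the estimate $v(x,t) \ge Ct/\phi(x)$ from Theorem \ref{hui-result1}, which gives a positive lower bound for $t \ge t_* > 0$ on $\overline{B}_{r_2}$, with a subsolution barrier $\underline{V}(x,t) = \delta_0 - Kt$ (noting $\underline{V}_t = -K \le 0 = \Delta\log\underline{V}$) valid on $\overline{B}_{r_2}\times[0,t_*]$, after matching $\underline{V} \le v$ on $\partial B_{r_2}$ using Theorem \ref{hui-result1} and shrinking $t_*$.

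Once $\delta \le v \le M$ on $\overline{B}_{r_2}\times[0,T]$ is in hand, the PDE rewrites as $v_t = v^{-1}\Delta v - v^{-2}|\nabla v|^2$, which is quasilinear uniformly parabolic with smooth coefficients in $v$ and $\nabla v$. Since $v_0 \in C^{\infty}(\overline{B}_{r_2})$ and $v$ attains it continuously at $t=0$ (upgrading the $L^1_{loc}$ convergence \eqref{v-time0-value} to continuous attainment via the two-sided bound together with interior parabolic equicontinuity of $v$), standard Schauder estimates up to the initial time, e.g.\ Theorem IV.10.1 of \cite{LSU}, applied iteratively to successive derivatives of $v$ yield $v \in C^{k,\alpha}(\overline{B}_{r_2}\times[0,T])$ for every $k \in \mathbb{N}$ and $\alpha \in (0,1)$, hence $v \in C^{\infty}(\overline{B}_{r_2}\times[0,T])$.

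The main obstacle is the lower bound near $t=0$: Theorem \ref{hui-result1} gives $v \ge Ct/\phi$, which degenerates as $t \to 0^+$, while the $L^1_{loc}$ convergence \eqref{v-time0-value} alone does not produce a pointwise lower bound. Bridging this gap cleanly most likely requires approximating $v$ from below by classical Dirichlet solutions $v_k$ on $B_{r_1}\times(0,T]$ with $v_k|_{\partial B_{r_1}} = k$ and smoothed initial data from the construction underlying Theorem \ref{hui-result1}; each $v_k$ is smooth up to $t=0$ by standard quasilinear theory, with lower bounds on $\overline{B}_{r_2}\times[0,T]$ depending only on $\delta_0$, and passing to the limit preserves these bounds on $v$.
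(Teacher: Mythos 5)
Your overall strategy --- establish two-sided positive bounds for $v$ on $\overline{B}_{r_2}\times[0,T]$ and then bootstrap with Schauder estimates up to $t=0$ --- matches the paper's, and the final bootstrap step is fine. The genuine gaps are in both barrier arguments, and they are not cosmetic: in each case the comparison you invoke requires control of $v$ on the lateral parabolic boundary near $t=0$, which is exactly what the lemma is trying to establish. For the upper bound, to conclude $v\le V$ on $B_{r_3}\times(0,T]$ you must dominate $v$ on $\partial B_{r_3}\times(0,T]$; but a solution of \eqref{v-boundary-infty-problem} is only assumed to be a classical solution in the open cylinder with an $L^1_{loc}$ initial trace, so no a priori upper bound for $v$ on $\partial B_{r_3}\times(0,t_0]$ is available (compact subsets of $Q_{r_1}$ stay away from $t=0$), and ``a large constant boundary value'' cannot be chosen --- the argument is circular. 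The paper avoids this by taking the supersolution $\psi(x,t)=C_1(t+1)e^{1/\phi(x)}$, which is $+\infty$ on $\partial B_{r_2}$ so that no lateral comparison is needed, and by running an integral comparison $\int_{B_{r_2}}(v-\psi)_+(x,t)\,dx\le\int_{B_{r_2}}(v-\psi)_+(x,t_1)\,dx$ from a positive time $t_1$ and letting $t_1\to0$ via \eqref{v-time0-value}.

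The lower bound has the same defect in a more serious form. Your subsolution $\underline{V}=\delta_0-Kt$ must be dominated by $v$ on $\partial B_{r_2}\times(0,t_*]$, but the only lower bound available there is of the form $Ct/\phi$ (and note that Theorem \ref{hui-result1} is an existence statement for one particular solution, not an estimate valid for an arbitrary solution of \eqref{v-boundary-infty-problem}; for a general $v$ one only has $\inf_{B_{r_1}\times(t_1,t_2)}v>0$ for $t_1>0$). Any such bound tends to $0$ as $t\to0$ while $\underline{V}\to\delta_0>0$, so the matching fails for every choice of $t_*$ --- shrinking $t_*$ does not help because the failure occurs at times $t\ll t_*$. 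You correctly flag this as the main obstacle, but the fallback you sketch (Dirichlet approximations from below with boundary value $k$) runs into the same circularity, since $v\ge v_k$ again requires a lower bound for $v$ on a lateral boundary near $t=0$. The paper's resolution is genuinely different: for each $p\in\overline{B}_{r_3}$ it compares $v(\cdot+p,\cdot)$ with the maximal Cauchy solution $w$ on all of $\R^2$ with initial data $\varepsilon\chi_{\overline{B}_{\delta_1}}$ supported inside $B_{r_1}-p$ (this comparison is legitimate because it is performed on the full domain $B_{r_1}$, where $v=\infty$ on the lateral boundary trivially dominates), and then exploits the parabolic scaling $w_{\alpha}(x,t)=w(\alpha x,\alpha^2t)$: as $\alpha\to0$ the $w_{\alpha}$ increase to the constant $\varepsilon$ locally uniformly on $\R^2\times(0,1]$, which yields $v(p,t)\ge\varepsilon/2$ for all $p\in\overline{B}_{r_3}$ and all $0<t\le\alpha_1^2$, uniformly down to $t=0$. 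Without an ingredient of this kind your proof does not close.
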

\begin{proof}
We will first use a modification of the technique of 
Lemma 1.6 of \cite{Hu1} to show that $v$ is uniformly bounded below 
by a positive constant on $\2{B}_{r_3}\times [0,1]$ for any $0<r_3<r_1$. 
Let $0<r_3<r_2<r_1$, $\delta_1=r_3-r_2$, and 
\begin{equation*}
\3=\frac{1}{2}\min_{\2{B}_{r_2}}v_0.
\end{equation*}
Since $v_0$ is a smooth positive function on $B_{r_1}$, $\3>0$. Let $w$ be 
the maximal solution of the equation \eqref{v-eqn} in $\R^2\times (0,T)$
constructed in \cite{DP1} and \cite{Hu2} with initial value $w(x,0)
=\3\chi_{\2{B}_{\delta_1}}$ and $T=\3|\2{B}_{\delta_1}|/4\pi$ which satisfies
\begin{equation*}
\int_{\R^2}w(x,t)\,dx=\int_{\R^2}w(x,0)\,dx-4\pi t\quad\forall 0\le t<T.
\end{equation*}
For any $\alpha>0$, let
\begin{equation*}
w_{\alpha}(x,t)=w(\alpha x,\alpha^2t)\quad\forall x\in\R^2,
0\le t<T/\alpha^2.
\end{equation*}
and
\begin{equation*}
v_{\alpha,p}(x,t)=v(\alpha x+p,\alpha^2t)\quad\forall p\in\2{B}_{r_3},
|x|<\delta_1/\alpha,t>0.
\end{equation*}
Since $v_0(x+p)\ge w(x,0)$ for any $p\in\2{B}_{r_3}$, $x+p\in B_{r_1}$, 
and $v=\infty$ on $\1 B_{r_1}\times (0,\infty)$, 
by Corollary 1.8 and Lemma 2.9 of \cite{Hu4},
\begin{align}\label{v-alpha-w-alpha-compare}
&v(x+p,t)\ge w(x,t)\quad\forall p\in\2{B}_{r_3},x+p\in B_{r_1},0\le t<T
\nonumber\\
\Rightarrow\quad&v_{\alpha,p}(x,t)\ge w_{\alpha}(x,t)\quad\forall 
p\in\2{B}_{r_3},|x|<\delta_1/\alpha, 0\le t<T/\alpha^2.
\end{align}
Since $w_{\alpha}(x,0)=w(\alpha x,0)=\3\chi_{\2{B}_{\delta_1/\alpha}}$,
\begin{equation}\label{w-alpha-time-0-compare}
0\le w_{\alpha_1}(x,0)\le w_{\alpha_2}(x,0)\le\3\quad\forall x\in\R^2,
\alpha_1\ge\alpha_2>0.
\end{equation}
By \eqref{w-alpha-time-0-compare} and the comparison theorems in 
\cite{DP1} and \cite{Hu2},
\begin{equation}\label{w-alpha-compare}
0<w_{\alpha_1}(x,t)\le w_{\alpha_2}(x,t)\le\3\quad\forall x\in\R^2,
0<t<\3|\2{B}_{\delta_1}|/(4\pi\alpha_1^2),\alpha_1\ge\alpha_2>0.
\end{equation}
We choose $0<\alpha_0<\delta_1$ such that 
$\3|\2{B}_{\delta_1}|/(4\pi\alpha_0^2)>1$.
By \eqref{w-alpha-compare} the equation \eqref{v-eqn} for 
$w_{\alpha}$, $0<\alpha\le\alpha_0$, is uniformly parabolic on every
compact set $K\subset\R^2\times (0,1]$. Then by the Schauder estimates 
\cite{LSU} $\{w_{\alpha}\}_{\{0<\alpha\le\alpha_0\}}$ are equi-Holder
continuous in $C^2(K)$ for any compact set $K\subset\R^2\times (0,1]$.
Hence by \eqref{w-alpha-compare} $w_{\alpha}$ increases and converges 
uniformly on every compact subset $K$ of $\R^2\times (0,1]$ to the 
constant function $\3$ in $\R^2\times (0,1]$ as $\alpha\to 0$. Then
there exists $\alpha_1\in (0,\alpha_0)$ such that
\begin{equation}\label{w-alpha-lower-bd}
w_{\alpha}(x,1)\ge\frac{\3}{2}\quad\forall |x|\le 1,0<\alpha\le\alpha_1.
\end{equation}
By \eqref{v-alpha-w-alpha-compare} and \eqref{w-alpha-lower-bd},
\begin{align}\label{v-compact-set-lower-bd}
&v(\alpha x+p,\alpha^2)=v_{\alpha,p}(x,1)\ge\frac{\3}{2}
\quad\forall p\in\2{B}_{r_3},|x|\le 1,0<\alpha\le\alpha_1\nonumber\\
\Rightarrow\quad&v(p+y,t)\ge\frac{\3}{2}\quad\forall p\in\2{B}_{r_3},
|y|\le\sqrt{t},0<t\le\alpha_1^2.
\end{align}
Let $\lambda$ and $\phi$ be the first positive eigenvalue and the 
first positive eigenfunction of $-\Delta$ on $B_{r_2}$ with 
$\|\phi\|_{L^2(B_{r_2})}=1$. Let 
$C_1=\max (2\lambda\|\phi\|_{L^{\infty}},12\|\nabla\phi\|_{L^{\infty}},
\|v_0\|_{L^{\infty}(\2{B}_{r_1})}+1)$ and
\begin{equation*}
\psi(x,t)=C_1(t+1)e^{1/\phi(x)}.
\end{equation*}
By the computation on P.784-785 of \cite{Hu3}, $\psi$ is a supersolution
of \eqref{v-eqn}. Then an argument similar to the proof Theorem 2.9 of 
\cite{Hu4},
\begin{equation}\label{v-psi-compare}
\int_{B_{r_2}}(v-\psi)_+(x,t)\,dx\le\int_{B_{r_2}}(v-\psi)_+(x,t_1)\,dx
\quad\forall t\ge t_1>0.
\end{equation}
Since
\begin{align*}
\int_{B_{r_2}}(v-\psi)_+(x,t_1)\,dx\le&\int_{B_{r_2}}|v(x,t_1)-v_0(x)|\,dx
+\int_{B_{r_2}}(v_0(x)-\psi(x,t_1))_+\,dx\nonumber\\
\to&0\qquad\mbox{ as }t_1\to 0,
\end{align*}
letting $t_1\to 0$ in \eqref{v-psi-compare},
\begin{equation*}
\int_{B_{r_2}}(v-\psi)_+(x,t)\,dx=0\quad\forall t>0.
\end{equation*}
Hence
\begin{equation*}
v(x,t)\le\psi(x,t)=C_1(t+1)e^{1/\phi(x)}\quad\forall |x|<r_2,t\ge 0.
\end{equation*}
Thus
\begin{equation}\label{v-upper-bd2}
v(x,t)\le C'(t+1)\quad\forall |x|\le r_3,t\ge 0
\end{equation}
for some constant $C'>0$.
By \eqref{v-compact-set-lower-bd} and \eqref{v-upper-bd2}, the equation
\eqref{v-eqn} is uniformly parabolic on $\2{B}_{r_3}\times [0,1]$ for
any $0<r_3<r_1$. By standard parabolic estimates \cite{LSU} and a 
bootstrap argument, $v\in C^{\infty}(B_{r_1}\times [0,\infty))$ and the
lemma follows. 
\end{proof}

We will now prove Theorem \ref{thm-a1}.

\ni{\it Proof of Theorem \ref{thm-a1}}:
Let $v_0(x)=e^{2u_0(x)}$.
For any $k=2,3,\dots$, by Theorem \ref{hui-result1} there exists a 
solution $v_k$ of \eqref{v-boundary-infty-problem} with $r_1=1-(1/k)$
and initial value $v_0$ which satisfies \eqref{aronson-benilan} in 
$B_{1-(1/k)}\times (0,\infty)$ and  
\begin{equation}\label{vk-lower-bd}
v_k(x,t)\ge C_k\frac{t}{\phi_k(x)}\quad\mbox{ in }B_{1-(1/k)}\times (0,\infty)
\end{equation}
for some constant $C_k>0$ depending on $\phi_k$ and $\lambda_k$ where 
$\phi_k$ and $\lambda_k$ are the first positive eigenfunction 
and the first positive eigenvalue of the Laplace operator 
$-\Delta$ on $B_{1-(1/k)}$ with $\|\phi\|_{L^2(B_{1-(1/k)})}=1$.
By Lemma \ref{v-smooth}, $v_k\in C^{\infty}(B_{1-(1/k)}\times [0,\infty))$.  
By Corollary 1.8 and Theorem 2.9 of \cite{Hu4},
\begin{equation}\label{vk-v(k+1)-compare}
v_k\ge v_{k+1}\quad\mbox{ in }B_{1-(1/k)}\times (0,\infty)
\quad\forall k\ge 2.
\end{equation} 

Since $\phi_k$ and $\lambda_k$ converges to $\phi$ and $\lambda$ as 
$k\to\infty$ where $\phi$ and $\lambda$ are the first positive eigenfunction 
and the first positive eigenvalue of the Laplace operator $-\Delta$ 
on $B_1$ with $\|\phi\|_{L^2(B_1)}=1$,  by the proof of Theorem 1.2 and 
Theorem 1.6 of \cite{Hu3} there exists a constant 
$C>0$ such that
\begin{equation}\label{ck-bd}
C_k\ge C\quad\forall k\ge 2.
\end{equation}
Let $k_0\ge 2$. Then by \eqref{vk-lower-bd} and \eqref{ck-bd} there exists 
a constant $C_0>0$ such that
\begin{equation}\label{vk-lower-bd2}
v_k(x,t)\ge C_0t\quad\mbox{ in }B_{1-(1/k_0)}\times (0,\infty)\quad\forall
k>k_0.
\end{equation}
By Corollary 1.8 of \cite{Hu4} for any $0<t_1<t_2$ there exists a 
constant $C'>0$ such that 
\begin{equation}\label{vk-upper-bd}
v_k(x,t)\le C'\quad\forall |x|\le 1-(1/k_0),t_1\le t\le t_2,k>k_0.
\end{equation}
By \eqref{vk-lower-bd2} and \eqref{vk-upper-bd} the equation 
\eqref{v-eqn} for $v_k$ are uniformly parabolic on $\2{Q}_{1-(1/k_0)}$ for all 
$k>k_0$. By the parabolic Schauder estimates \cite{LSU} $v_k$ are 
equi-Holder continuous in 
$C^2(K)$ for every compact set $K\subset\2{Q}_{1-(1/k_0)}$ 
and all $k>k_0$. Hence by \eqref{vk-v(k+1)-compare} $v_k$ decreases 
and converges uniformly on $K$ to a solution $v$ of \eqref{v-eqn} in 
$\mcD\times (0,\infty)$ as $k\to\infty$ for any compact set 
$K\subset\mcD\times (0,\infty)$. Since $v_k$ satisfies 
\eqref{aronson-benilan} in $Q_{1-(1/k)}$ for any $k\in\Z^+$, 
by an argument similar to the proof of Theorem 2.4 of \cite{Hu3} $v$ 
has initial value $v_0$ and satisfies \eqref{aronson-benilan} in 
$\mcD\times (0,\infty)$. Letting $k\to\infty$ in \eqref{vk-lower-bd}, 
by \eqref{ck-bd},
\begin{align*}
&v(x,t)\ge C\frac{t}{\phi(x)}\quad\mbox{ in }\mcD\times (0,\infty)\\
\Rightarrow\quad&\lim_{x\to x_0}v(x,t)=\infty\quad\forall |x_0|=1,t>0.
\end{align*}
Hence $v$ is a solution of \eqref{v-boundary-infty-problem} with $B_{r_1}
=\mcD$. By Lemma \ref{v-smooth}, $v\in C^{\infty}(\mcD\times [0,\infty))$.

Let $u$ be given by \eqref{u-v-relation} and $g(t)= e^{2u}\delta_{ij}$. 
Then $g(t)$ is a smooth solution of \eqref{Ricci-eqn} in $\mcD\times 
[0,\infty)$ with initial value
$e^{2u_0}\delta_{ij}$. By \eqref{v-eqn} and \eqref{aronson-benilan}, we get 
\eqref{K-lower-bd}. By \eqref{K-lower-bd},
\begin{equation}\label{u-sup-elliptic}
\Delta u(x,t)\le\frac{1}{2t}e^{2u(x,t)}\quad\mbox{ in }\mcD\quad\forall
t>0.
\end{equation}
For any $0<\delta<1$, let 
\begin{equation*}\label{psi-defn}
\psi_{\delta}(x,t)=\log\frac{2(1+\delta)}{(1+\delta)^2-|x|^2}
+\frac{1}{2}\log (2t)\quad\forall x\in\mcD,t>0.
\end{equation*}
Then $\psi_{\delta}$ satisfies
\begin{equation}\label{psi-eqn}
\Delta\psi_{\delta}(x,t)=\frac{1}{2t}e^{2\psi_{\delta}(x,t)}\quad\mbox{ in }
\mcD\quad\forall t>0.
\end{equation}
Let $\Omega(t)=\{x\in\mcD:u(x,t)<\psi_{\delta}(x,t)\}$.
By \eqref{u-sup-elliptic} and \eqref{psi-eqn},
\begin{equation}\label{u-psi-compare-eqn}
\Delta(u(x,t)-\psi_{\delta}(x,t))\le\frac{1}{2t}(e^{2u(x,t)}-e^{2\psi_{\delta}(x,t)})
<0\quad\mbox{ in }\Omega(t)\quad\forall t>0.
\end{equation}
Since $u(x,t)-\psi_{\delta}(x,t)\to\infty$ as $|x|\to 1$, $\2{\Omega(t)}
\subset\mcD$. Hence by \eqref{u-psi-compare-eqn} and the maximum principle 
\cite{GT},
\begin{align}\label{u-psi-compare-eqn2}
&u(x,t)-\psi_{\delta}(x,t)\ge\min_{x\in\1 \Omega(t)}(u(x,t)-\psi_{\delta}(x,t))
=0\quad\mbox{ in }\Omega(t)\quad\forall t>0\nonumber\\
\Rightarrow\quad&u(x,t)\ge\psi_{\delta}(x,t)\quad\mbox{ in }\mcD\quad\forall
t>0.
\end{align}
Letting $\delta\to 0$ in \eqref{u-psi-compare-eqn2}, \eqref{u-lower-bd}
follows. We now let $v_{k,m}$ be the solution of  
\begin{equation}\label{vk-boundary-m-problem}
\left\{\begin{aligned}
&v_t=\Delta\log v\qquad\qquad\quad\text{in }Q_{1-(1/k)}\\
&v>0\qquad\qquad\qquad\quad\,\,\,\mbox{ in }Q_{1-(1/k)}\\
&v(x,t)=v_0(x)e^{mt^2+2t}\quad\mbox{on }\1 B_{1-(1/k)}\times (0,\infty)\\
&v(x,0)=v_0(x)\qquad\quad\,\,\mbox{ in }B_{1-(1/k)}
\end{aligned}\right.
\end{equation}
for any $k\ge 2$, $m\in\Z^+$, which can be constructed by similar technique as
that of \cite{Hu3}. By an argument similar to the proof of \cite{Hu3} 
$v_{k,m}$ increases and converges uniformly to $v_k$ on every compact subset
$K$ of $Q_{1-(1/k)}$ as $m\to\infty$ and
$v_{k,m}$ satisfies
\begin{equation}\label{vkm-lower-upper-bd}
0<\min_{|x|\le 1-k^{-1}}v_0(x)\le v_{k,m}(x,t)\le e^{mT_1^2+2T_1}\max_{|x|\le 1-k^{-1}}v_0(x)
\quad\forall |x|\le 1-k^{-1},0\le t\le T_1,k\ge 2,
\end{equation}  
for any $T_1>0$. By \eqref{vkm-lower-upper-bd} the equation \eqref{v-eqn}
for $v_{k,m}$ is uniformly parabolic on $\2{Q_{1-(1/k)}^{T_1}}$ for any $T_1>0$. 
Hence by \eqref{vk-boundary-m-problem}, \eqref{vkm-lower-upper-bd}, 
and parabolic estimates \cite{LSU}, 
$v_{k,m}\in C^{\infty}(\2{Q}_{1-(1/k)})$. Let $p_{k,m}=\1_tv_{k,m}/v_{k,m}$. Then 
$p_{k,m}\in C^{\infty}(\2{Q}_{1-(1/k)})$. By \eqref{K-time0-upper-bd} and 
\eqref{vk-boundary-m-problem} $p_{k,m}$ satisfies
\begin{equation}\label{pkm-eqn}
p_t=e^{-v_{k,m}}\Delta p-p^2\quad\text{ in }Q_{1-(1/k)}
\end{equation}
and
\begin{equation}\label{pkm-bdary}
\left\{\begin{aligned}
&p=2mt+2\quad\,\mbox{ on }\1 B_{1-(1/k)}\times [0,\infty)\\
&p(x,0)\ge 2\qquad\forall |x|\le 1-k^{-1}. 
\end{aligned}\right.
\end{equation}
Note that the function $2/(2t+1)$ satisfies \eqref{pkm-eqn}
and by \eqref{pkm-bdary},
\begin{equation*}
p_{k,m}\ge 2/(2t+1)\quad\text{ on }\1_pQ_{1-(1/k)}.
\end{equation*}
Hence by the maximum principle (cf. \cite{A}),
\begin{align*}\label{vt-v-upper-bd}
&\frac{\1_tv_{k,m}}{v_{k,m}}=p_{k,m}(x,t)\ge\frac{2}{2t+1}\quad\text{ in }
Q_{1-(1/k)}\quad\forall k\ge 2,m\ge 1\\
\Rightarrow\quad&\frac{v_t}{v}\ge\frac{2}{2t+1}\qquad\qquad\qquad
\qquad\text{ in }\mcD\times [0,\infty)\quad\mbox{ as }m\to\infty, 
k\to\infty,
\end{align*}
and \eqref{K-upper-bd} follows. By \eqref{K-upper-bd}
and an argument similar to the proof of \eqref{u-lower-bd} we get 
\eqref{u-upper-bd}. By \eqref{u-lower-bd} $g(t)$ is complete for any $t>0$.
Now by \eqref{K-upper-bd},
\begin{align*}
&K[u(t)]\le -\frac{1}{2t+1}\qquad\qquad\quad\mbox{ in }
\mcD\times [0,\infty)\\
\Rightarrow\quad&\frac{\1 u}{\1 t}\ge\frac{1}{2t+1}\qquad\qquad
\qquad\qquad\mbox{ in }\mcD\times [0,\infty)\\
\Rightarrow\quad&u(x,t)\ge u_0(x)+\frac{1}{2}\log(2t+1)
\quad\mbox{ in }\mcD\times [0,\infty) 
\end{align*}
and \eqref{u-u0-ineqn} follows. Suppose $\4{g}(t)$, $0\le t\le\3$, is a 
solution of \eqref{Ricci-eqn} in $\mcD\times (0,\3)$ with $g(0)=g_0$. 
As in \cite{GiT1} we can write $\4{g}=e^{2\4{u}}\delta_{ij}$. Let 
$\4{v}=e^{2\4{u}}$. Then $\4{v}$ is a solution of 
\eqref{v-boundary-infty-problem} with $r_1=1$. Hence by Corollary 1.8
and Theorem 2.9 of \cite{Hu4},
\begin{align*}
&\4{v}(x,t)\le v_k(x,t)\quad\forall |x|\le 1-(1/k),0\le t\le\3\\
\Rightarrow\quad&\4{v}(x,t)\le v(x,t)\quad\forall |x|<1,0\le t\le\3
\quad\mbox{ as }
k\to\infty
\end{align*}
and \eqref{tilde-g-and-g-compare} follows. 

{\hfill$\square$\vspace{6pt}}

\ni{\it Proof of Theorem \ref{thm-a4}}:
Let $v_0=e^{2u_0},v_k,v_{k,m},v,u, g(t),p_{k,m}$ be as in the proof of 
Theorem \ref{thm-a2}. By the proof of Theorem \ref{thm-a2} 
$v\in C^{\infty}(\mcD\times [0,\infty))$ satisfies \eqref{K-lower-bd} and
\eqref{u-lower-bd} and $g(t)$ is a smooth maximal instanteous complete 
solution of \eqref{Ricci-eqn} for all time $t\ge 0$. 

Suppose now \eqref{K-time0-upper-bd2} holds for some constant $K_0\ge 0$. 
We will show that the curvature $K[u(t)]$ satisfies \eqref{K-upper-bd2}. 
Let $T_0=(2K_0)^{-1}$. By \eqref{K-time0-upper-bd2} and 
\eqref{vk-boundary-m-problem}, $p_{k,m}$ satisfies
\begin{equation}\label{pkm-lower-bd-p-bdy}
\left\{\begin{aligned}
&p(x,t)=2mt+2\ge -\frac{1}{(2K_0)^{-1}-t}\quad\forall |x|=1-k^{-1},t\ge 0\\
&p(x,0)\ge -2K_0\qquad\qquad\qquad\qquad\quad\forall |x|\le 1-k^{-1}.
\end{aligned}\right.
\end{equation}
Since both $p_{k,m}$ and $-1/((2K_0)^{-1}-t)$ satisfy \eqref{pkm-eqn}, 
by \eqref{pkm-lower-bd-p-bdy} and the maximum principle,
\begin{align*}
&\frac{\1_tv_{k,m}}{v_{k,m}}=p_{k,m}(x,t)
\ge-\frac{1}{(2K_0)^{-1}-t}\quad\forall |x|<1-(1/k),0\le t<(2K_0)^{-1}\\
\Rightarrow\quad&\frac{v_t}{v}\ge-\frac{1}{(2K_0)^{-1}-t}
\quad\forall |x|<1-(1/k),0\le t<(2K_0)^{-1}\quad\mbox{ as }m\to\infty,
k\to\infty
\end{align*}
and \eqref{K-upper-bd2} follows. By \eqref{K-upper-bd2} and 
\eqref{u-eqn},
\begin{equation*}
u_t(x,t)\ge -\frac{1}{K_0^{-1}-2t}\quad\forall |x|<1,0<t<(2K_0)^{-1}. 
\end{equation*}
Integrating the above equation with respect to t and \eqref{u-lower-bd1} 
follows.

Suppose now \eqref{K-time0-upper-bd2} holds with $K_0=0$. Then by 
\eqref{K-time0-upper-bd2} and \eqref{vk-boundary-m-problem},
\begin{equation*}
p_{k,m}\ge 0\quad\mbox{ on }\1_pQ_{1-(1/k)}.
\end{equation*} 
Hence by the maximum principle,
\begin{align*}
&\frac{\1_tv_{k,m}}{v_{k,m}}=p_{k,m}(x,t)
\ge 0\quad\forall |x|<1-k^{-1},t\ge 0\\
\Rightarrow\quad&v_t\ge 0\quad\forall |x|<1,t\ge 0
\quad\mbox{ as }m\to\infty,k\to\infty
\end{align*}
and \eqref{K-upper-bd3} follows. By \eqref{K-upper-bd3} and 
\eqref{u-v-relation},
\begin{equation*}
u_t(x,t)\ge 0\quad\forall |x|<1,t>0
\end{equation*}
and \eqref{u-lower-bd3} follows.

{\hfill$\square$\vspace{6pt}}

\ni{\it Proof of Theorem \ref{thm-a5}}:
We will use a modification of the technique of \cite{DP2} and \cite{Hu3}
to prove the theorem. Let $v_0=e^{2u_0}$. by the results of \cite{DP1},
\cite{Hu2}, and \cite{Hs1}, there exists a unique maximal solution $v$ 
of \eqref{v-eqn} in $\R^2\times (0,T)$ which satisfies \eqref{vol} and
\eqref{aronson-benilan} in $\R^2\times (0,T)$ where $T$ is given by
\eqref{T-value}. By Theorem 3.4 of \cite{ERV2}
and the result of \cite{Hs1} for any $0<T_1<T$ and $r_0>1$ there exists a 
constant $C>0$ such that 
\begin{equation}\label{v-special-soln-ineqn}
v(x,t)\ge\frac{Ct}{|x|^2(\log |x|)^2}\quad\forall |x|\ge r_0,0\le t<T.
\end{equation}
Let $u$ be given by \eqref{u-v-relation} and $g(t)=e^{2u}\delta_{ij}$. 
Then by \eqref{aronson-benilan} and \eqref{v-special-soln-ineqn}, 
\eqref{K-lower-bd} and \eqref{u-lower-bd2} hold.

For any $k\in\Z^+$, by the same argument as the proof of Theorem 
\ref{thm-a4} there exists a solution $\4{v}_k$ of 
\eqref{v-boundary-infty-problem} with $r_1=k$ which satisfies 
\eqref{aronson-benilan} in $B_k\times (0,\infty)$. 
By Corollary 1.8 and Theorem 2.9 of \cite{Hu4},
\begin{equation}\label{tilde-vk-v(k+1)-compare}
\4{v}_k\ge\4{v}_{k+1}\ge v\quad\mbox{ in }B_k\times (0,T)
\quad\forall k\in\Z^+.
\end{equation} 
Let $k_0\in\Z^+$ and $0<t_1<t_2<T$. By Corollary 1.8 of \cite{Hu4} there 
exists a constant $C>0$ such that 
\begin{equation}\label{tilde-vk-upper-bd}
\4{v}_k\le C\quad\mbox{ in }\2{B}_{k_0}\times [t_1,t_2]\quad\forall k>k_0.
\end{equation} 
Hence by \eqref{tilde-vk-v(k+1)-compare} and \eqref{tilde-vk-upper-bd}
the equation \eqref{v-eqn} for $\4{v}_k$ is uniformly parabolic on 
$\2{B}_{k_0}\times [t_1,t_2]$ for any $k>k_0$. By the Schauder estimates
\cite{LSU} the sequence $\{\4{v}_k\}_{\{k>k_0\}}$ is equi-Holder 
continuous in 
$C^2(\2{B}_{k_0}\times [t_1,t_2])$. Hence by \eqref{tilde-vk-v(k+1)-compare} as 
$k\to\infty$, $\4{v}_k$ decreases and converges unformly on every compact 
subset $K$ of $\R^2\times (0,T)$ to a solution $\4{v}$ of \eqref{v-eqn}
in $\R^2\times (0,T)$.
Similar to the proof of Theorem 1.2 of \cite{Hu3} $\4{v}$ has initial
value $v_0$. By \eqref{tilde-vk-v(k+1)-compare},
\begin{equation}\label{v-tilde-v-compare}
\4{v}\ge v\quad\mbox{ in }\R^2\times (0,T)\quad\forall k\in\Z^+.
\end{equation} 
Since $v$ is the maximal solution of \eqref{v-eqn} in $\R^2\times (0,T)$
with initial value $v_0$, by \eqref{v-tilde-v-compare}, 
$\4{v}\equiv v$ in $\R^2\times [0,T)$. 

Since $v_0>0$ on $\R^2$, by  \eqref{tilde-vk-v(k+1)-compare} and 
an argument similar to the proof of Lemma \ref{v-smooth},
\begin{equation*}
0<\inf_{\2{B}_{r_1}\times [0,1]}v\le\sup_{\2{B}_{r_1}\times [0,1]}v<\infty
\quad\forall r_1>0
\end{equation*} 
and $v\in C^{\infty}(\R^2\times [0,T))$. Then $g(t)$ is the smooth 
maximal solution of \eqref{Ricci-eqn} in $0\le t<T$ with initial value 
$g_0$. By \eqref{u-lower-bd2}, $g(t)$ is complete for any $0<t<T$.

Finally if in addition the Gauss curvature satisfies \eqref{K-time0-upper-bd2}
for some constant $K_0\ge 0$, then by an argument similar to the proof
of Theorem \eqref{thm-a4} we get that \eqref{K-upper-bd2} and
\eqref{u-lower-bd1} hold on $\R^2$ for any $0\le t<\min ((2K_0)^{-1},T)$ if 
$K_0>0$ and \eqref{K-upper-bd3} and \eqref{u-lower-bd3} hold on $\R^2$ for any 
$0\le t<T$ if $K_0=0$. By \eqref{u-lower-bd3}, $T=\infty$ if $K_0=0$.

If $K_0>0$, we claim that $T\ge (2K_0)^{-1}$. Suppose not. Then $T<(2K_0)^{-1}$.
Hence by \eqref{u-lower-bd1},
\begin{equation}
\mbox{Vol}_{g(T)}(\R^2)=\int_{\R^2}v(x,T)\,dx\ge(1-2K_0T)\int_{\R^2}v_0(x)\,dx>0.
\end{equation}
This contradicts \eqref{vol}. Hence $T\ge (2K_0)^{-1}$ and the theorem follows.

{\hfill$\square$\vspace{6pt}}

\end{document}